\newtheorem{theorem}{Theorem}[section]
\newtheorem{proposition}[theorem]{Proposition}
\theoremstyle{definition}
\newtheorem{definition}[theorem]{Definition}
\title{Rank and border rank of real ternary cubics
}
\author{Maurizio Banchi}
\begin{document}
\maketitle
\begin{abstract}
In this work we give the classification of real ternary cubic forms with respect to rank and border rank up to $SL(3)$-action and
we examine the differences with the complex case.

\end{abstract}
\section{Introduction}
In this paper we solve the classification's problem of real ternary cubic forms with respect to rank and border rank.
This problem is related to the representation of symmetric tensors with real coefficients and it is 
relevant in applications as Electrical Engineering (such as Antenna Array Processing), in Algebraic Statistic, 
in Computer Science, in Data Analysis and in other scientific areas.\\
This topic is a particular case of the more general "Waring decomposition`` of a polynomial
as a sum of powers that goes back to J.J. Sylvester,A. Clebsch,
A.Palatini, A.Terracini and many other mathematicians of XIX and XX centuries.
The decomposition of forms as a sum of linear powers over $\mathbb{R}$ is actually not so developed as its complex 
counterpart because over $\mathbb{R}$ there can be more than one generic rank.\\
Only recently, Comon and Ottaviani (\cite{COM-OTT}) conjectured a list of typical ranks for binary real forms over
the reals and G.Blekherman was able to prove this conjecture (\cite{BLE}).\\
Let $V$ be a vector space of dimension n+1 over the field $\mathbb{R}$, let $S^{d}V$ be the space of symmetric tensors of order d, and let $f\in S^{d}V$.\\
The definition of real rank for a homogeneous polynomial of degree d, $f\in \mathbb{R}[x_{0},..,x_{n}]$ is:\\
\begin{definition}
The (Waring or symmetric) real rank of $f$, denoted by $rk_{\mathbb{R}}(f)$, is the minimum integer r such that
\begin{equation}
 f=\sum_{i=1}^{r}\lambda_{i}l_{i}^{d}
\end{equation}
where $l_i$ are linear forms and $\lambda_{i}\in\mathbb{R}$.
\end{definition}
Since the field is $\mathbb{R}$, the coefficients $\lambda_{i}$ can be reduced to be only $\pm 1$, while in the complex case, we
can impose $\lambda_{i}=1$ for all i.\\
The (symmetric) real border rank of a polynomial $P$, denoted by $\underline{rk}_{\mathbb{R}}(P)$, is defined in terms of limit:
\begin{definition}
 The (symmetric) real border rank of a homogeneous polynomial $P$, denoted by $\underline{rk}_{\mathbb{R}}(P)$, is the smallest
 positive integer $r$ such that there
exists a family of polynomials $P_{\epsilon}$, each of real rank $r$, such that 
\begin{equation}
 P=\lim_{\epsilon \rightarrow 0}P_{\epsilon}.
\end{equation}
\end{definition}
We may define in the same way $rk_{\mathbb{C}}(f)$ and $\underline{rk}_{\mathbb{C}}(f)$ for a homogeneous polynomial $f$ 
(see \cite{LAN} ch. 2).\\
We remark that, for a  general tensor $T$, the rank depends on the field but 
for any field $rk(T) \geq \underline {rk}(T)$.\\
We get for real polynomial $f$, $rk_{\mathbb{R}}(f)\ge rk_{\mathbb{C}}(f)$,
$\underline{rk}_{\mathbb{R}}(f)\ge \underline{r}k_{\mathbb{C}}(f)$
 and inequality can be strict between real and complex rank as the
following example
\begin{displaymath}
 f=2x^3-6xy^2=(x+iy)^{3}+(x-iy)^{3}=(\sqrt[3]{4}x)^3-(x+y)^3-(x-y)^3
\end{displaymath}
shows.\\
In the above example,
$$
\underline{rk}_{\mathbb{C}}(f)=rk_{\mathbb{C}}(f)=2
$$ 
while 
$$
\underline{rk}_{\mathbb{R}}(f)=rk_{\mathbb{R}}(f)=3.
$$
Sylvester gave a method to compute the symmetric rank of a symmetric tensor in $\mathbb{P}(S^dV)$ when dim(V)=2
and Comas and Seiguer implemented this method by giving a complete classification over the complex numbers (\cite{COM-SEI}).\\
Over $\mathbb{R}$ there are algorithms for computing rank of a general real binary form of degree $d=4$ and $d=5$ (see \cite{COM-OTT}, 
\cite{COM-MOU},\cite{BLE} for the decomposition of real forms).\\
The research of an explicit decomposition algorithm  of a (symmetric) tensor is an open problem  and much of the paper 
of Landsberg-Teitler  (\cite{LAN-TEI})is devoted to such important  area.\\
\section{Statement of main result}
The aim of this paper is to compute explicitly the (real) rank for real ternary cubic forms and to show that the maximal real rank
five is obtained in three cases,that is, the union
of a conic and a tangent line (see \cite{LAN-TEI}),the 
new two cases of the cubic that factors as the union of a  conic and an external line and the imaginary triangle (see \S6.3 and \S6.4).\\
We compare our Table 1 with the Table 2 obtained in \cite{LAN-TEI} where the ranks and border ranks 
of plane cubic curves are computed over $\mathbb{C}$.
Over $\mathbb{R}$ we obtained that the difference between the rank and the border rank is 5-3=2 in the case of real conic
plus tangent line  and  5-4=1 in the case of real conic plus external line and the imaginary triangle.
We also find the real decomposition of normal forms in each case under the action of $SL(3)$.
Our result is obtained by looking at the singularity of the Hessian of each normal forms.
The main result is resumed in the following table  that shows the rank and border rank of ternary cubics on $\mathbb{R}$, up to $SL(3)$-action.\\
\begin{theorem}\label{Maurizio-Giorgio}
Ranks and border ranks of real cubics, up to $SL(3)$, are as in Table 1.
\end{theorem}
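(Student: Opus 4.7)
The plan is to reduce Theorem \ref{Maurizio-Giorgio} to a finite case-by-case analysis by first writing down a representative of every $SL(3,\mathbb{R})$-orbit in $\mathbb{P}(S^{3}\mathbb{R}^{3})$. The list of real normal forms is obtained from the classical complex classification of plane cubics by splitting each complex orbit into its real types: a real nodal cubic decomposes into a crunode type and an acnode type; a reducible cubic splits as a real conic (definite or indefinite) plus a line that is secant, tangent or external; a totally reducible cubic gives either a real triangle or an \emph{imaginary triangle}, i.e. a real line together with a pair of complex conjugate lines. Once such a list of representatives is fixed, the complex rank and border rank of each orbit can be read off from Table 2 of \cite{LAN-TEI}, which immediately provides the lower bounds $rk_{\mathbb{R}}(f)\ge rk_{\mathbb{C}}(f)$ and $\underline{rk}_{\mathbb{R}}(f)\ge\underline{rk}_{\mathbb{C}}(f)$ recorded in Table 1.

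The second step is to exhibit, for each normal form $f$, an explicit real Waring decomposition $f=\sum\pm l_{i}^{3}$ realising the value of $rk_{\mathbb{R}}(f)$ claimed in the table. For forms that split completely over $\mathbb{R}$, or that already admit a real decomposition of minimal complex length, this is elementary. The genuinely delicate situation is when $f$ contains a real factor with no real zeroes (an imaginary conic, a pair of complex conjugate lines) or a singularity that forces the apolar scheme to avoid reduced real schemes of length equal to $rk_{\mathbb{C}}(f)$. For the three extremal cases singled out in \S2 (real conic plus tangent line, real conic plus external line, imaginary triangle), I would construct explicit decompositions with five real cubes: for the first case following the pattern of \cite{LAN-TEI}, and adapting the same idea to the other two by exploiting the real conic's symmetry and the conjugation action on the imaginary triangle. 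This yields the upper bound $rk_{\mathbb{R}}(f)\le 5$.

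The main obstacle is the matching lower bound $rk_{\mathbb{R}}(f)\ge 5$ in the two new cases, together with the border rank statement $\underline{rk}_{\mathbb{R}}(f)=4$ for them. Following the strategy announced in the introduction, I would attack both by analysing the Hessian of each normal form: a hypothetical real decomposition of length $\le 4$ translates via apolarity into a real zero-dimensional scheme of length $\le 4$ contained in the apolar ideal $f^{\perp}$, and the location and type of the singularities of $H(f)$ then forces this scheme either not to exist over $\mathbb{R}$ or to be incompatible with $f$. The same Hessian analysis, combined with a study of the rank of the middle catalecticant, is then used to exclude that a real family of rank $3$ can converge to a conic plus external line or to the imaginary triangle, giving $\underline{rk}_{\mathbb{R}}(f)=4$ (as opposed to the complex value $3$). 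Finally, for the border rank upper bounds I would produce, case by case, a one-parameter real family $P_{\varepsilon}\to f$ whose members have the smaller rank required. I expect the Hessian-based real-rank lower bound for the two new configurations, and the exclusion of real rank--$3$ approximations in those cases, to be the technically hardest steps.
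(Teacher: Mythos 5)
Your skeleton (real normal forms obtained by splitting the complex orbits, complex lower bounds read off from Table 2, explicit real decompositions for the upper bounds, and a Hessian-based argument for the two new rank-$5$ orbits) coincides with the paper's overall strategy, but the step you yourself flag as the hardest --- the lower bound $rk_{\mathbb{R}}(F)\ge 5$ for the real conic plus external line and for the imaginary triangle --- is left as a vague programme (``apolarity plus the singularities of $H(F)$ force the scheme not to exist''), and this is exactly where the mathematical content of the theorem lies. The paper's mechanism is concrete and short: if $F=\pm l_1^3\pm l_2^3\pm l_3^3\pm l_4^3$ with real linear forms, then for each pair $i\neq j$ the polar conic of $F$ at the real point $Q_{ij}=\{l_i=l_j=0\}$ is a combination of the squares of the two remaining forms, hence singular, so $Q_{ij}$ is a \emph{real} point of the Hessian curve. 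For $F=(x^2+y^2-z^2)z$ one has $H(F)=-8z(x^2+y^2+3z^2)$, whose real points all lie on the single line $z=0$; forcing all the pairwise intersection points onto that line makes the four lines concurrent, and at the common point the polar of $F$ would vanish identically, i.e.\ $F$ would be a cone, a contradiction. The same argument settles the imaginary triangle, with the extra alternative that one of the $l_i$ is the real line $x+y+z$, excluded by computing the Aronhold invariant $S(\lambda F-(x+y+z)^3)=\lambda^4\neq 0$. Without this (or an equivalent) argument, your proof of the two new entries $rk_{\mathbb{R}}=5$ is not complete.

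Two further corrections. First, your parenthetical ``as opposed to the complex value $3$'' is wrong: over $\mathbb{C}$ the conic plus non-tangent line and the triangle both have border rank $4$ (Table 2, rows 4 and 10), so $\underline{rk}_{\mathbb{R}}\ge 4$ is immediate from $\underline{rk}_{\mathbb{R}}\ge\underline{rk}_{\mathbb{C}}$ and no exclusion of rank-$3$ approximations is needed; what does require an argument is the upper bound $\underline{rk}_{\mathbb{R}}\le 4$, i.e.\ a real rank-$4$ family degenerating to the given form (for the imaginary triangle, the Hesse pencil as $\lambda\to-\tfrac12$), which you only mention in passing. Second, the complex lower bound does not suffice for the rows with $rk_{\mathbb{R}}>rk_{\mathbb{C}}$ but $rk_{\mathbb{R}}<5$: real concurrent lines ($3>2$) and the imaginary Fermat $x^3+y^3+z^3+6xyz$ ($4>3$). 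The paper disposes of the first via the known real classification of binary cubics (Table 3) and of the second via the invariants $S=0$, $T<0$, using that the essentially unique length-$3$ complex decomposition has two non-real linear forms; your plan as written supplies no real-specific lower bound for these rows.
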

\begin{table}[htb] \centering
 \begin {tabular}{lllll}
 \textbf{Description}&\textbf{normal form}&\textbf{$rk_{\mathbb{R}}$}&\textbf{$\underline{rk}_{\mathbb{R}}$}
 &\textbf{Hessian}\\ \hline
  1)triple line&$x^3$&1&1& \\ \hline
  2)im. concurrent lines&$x(x^2+y^2)$&2&2& \\ \hline
  3)real concurrent lines&$x(x^2-y^2)$&3&3& \\ \hline
  4)double line+line&$x^2y$&3&2& \\ \hline   
  5)im. conic+line&$(x^2+y^2+z^2)x$&4&4&$-x(-3x^2+y^2+z^2)$\\ 
  & & & & \textbf{real conic+ext. line} \\ \hline
  6)real conic+ext. line&$(x^2+y^2-z^2)z$&5&4&$-z(x^2+y^2+3z^2)$\\ 
  & & & & \textbf{im. conic+line} \\ \hline
  7)real conic+secant line &$(x^2+y^2-z^2)y$&4&4&$y(x^2-3y^2-z^2)$\\ 
  & & & & \textbf{real conic+line} \\ \hline
  8)real conic+tangent line&$18y(x^2+yz)$&5&3&$(y-z)^3$\\ 
  & & & & \textbf{triple line} \\ \hline
  9)real Fermat (Hesse $\lambda =0)$&$x^3+y^3+z^3$&3&3&$xyz$\\
   & & & & \textbf{real triangle} \\ \hline
  10)im. Fermat (Hesse $\lambda =1$)&$x^3+y^3+z^3+6xyz$&4&3&$x^3+y^3+z^3-3xyz$\\ 
  & & & & \textbf{im. triangle} \\ \hline
  11)Hesse pencil $\lambda \neq -\frac{1}{2},0,1$&$x^3+y^3+z^3+6\lambda xyz$&4&4&$-\lambda (x^3+y^3+z^3)$\\
  & & & & $+(1+2\lambda ^3)xyz$ \\ 
  & & & & \textbf{Hesse pencil} \\ \hline
  12)im. triangle (Hesse $\lambda =-\frac{1}{2})$&$x^3+y^3+z^3-3xyz$&5&4&$x^3+y^3+z^3-3xyz$\\ 
  & & & & \textbf{im. triangle} \\ \hline
  13)cusp &$y^2z-x^3$&4&3&$xy^2$\\ 
  & & & & \textbf{double line +line} \\ \hline
  14)nodal cubic &$x^3+y^3+6xyz$&4&4&$xyz+x^3+y^3$\\ 
  & & & & \textbf{nodal cubic} \\ \hline
  15)cubica punctata &$y^2z-x^3+x^2z$ &4&4&$3xy^2-x^2z-y^2z$\\
  & & & & \textbf{cubica punctata} \\ \hline
  16)real triangle &$xyz$&4&4&$xyz$\\
  & & & & \textbf{real triangle}\\ \hline    
 \end{tabular}
 \caption{Ranks and border ranks of ternary cubics on $\mathbb{R}$} \label{tabrank}
\end{table}
\begin{proof}
The proof of theorem \ref{Maurizio-Giorgio} will be divided in the following sections where each case is settled.
\end{proof}
(The values of the border ranks in Table 1 are deduced by the values of Table 2, with an easy computation).
It follows from theorem 2.1 that 4 is the only typical rank for real ternary cubics. This has been proved also in 
\cite{BO}.\\
The following theorem and Table 2 are in \cite{LAN-TEI}, \cite{LAN}, and appeared first in (\cite{Seg}pag.142-143).
We modified the canonical form of orbit 9) of the Table 2) in order to have uniform notation with the real case and
we correct a small misprint that appeared in errata corrige in Landsberg's web page.
\begin{theorem}\label{landsberg-teitler}(L.-T.  \cite{LAN-TEI})
The ranks and border ranks of cubic curves over $\mathbb{C}$ are as in Table 2 (\S 96 in \cite{Seg}).
\end{theorem}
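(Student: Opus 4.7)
The plan is to proceed orbit by orbit, following Segre's classical classification of complex plane cubics under the $SL(3,\mathbb{C})$-action (\cite{Seg}, \S96), which reduces the problem to the finitely many normal forms listed in Table 2. For each normal form $f$ I would bound $rk_\mathbb{C}(f)$ from above by exhibiting an explicit Waring decomposition, and $\underline{rk}_\mathbb{C}(f)$ from above by writing down an explicit one-parameter family $P_\epsilon$ of smaller rank with $\lim_{\epsilon\to 0}P_\epsilon=f$, as in the textbook identity $x^2y=\lim_{\epsilon\to 0}\tfrac{1}{3\epsilon}\bigl((x+\epsilon y)^3-x^3\bigr)$.

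Lower bounds come from apolarity. First I would compute the catalecticant $\mathrm{Cat}_f\colon S^1V^*\to S^2V$, $\ell\mapsto\partial_\ell f$: for a ternary cubic this is a $3\times 6$ matrix whose rank is a lower bound for $\underline{rk}_\mathbb{C}(f)$, and this single invariant already pins down the border rank in almost every orbit. Inside the Hesse pencil, to isolate the Fermat value $\lambda=0$ at which the rank drops to $3$, I would exhibit a length-$3$ apolar subscheme of $\mathbb{P}(V^*)$ contained in $V(f^\perp)$ (three distinct points), and for generic $\lambda$ check via apolarity that no such scheme exists. Where the rank genuinely exceeds the border rank --- the cuspidal, double-line-plus-line and conic-plus-tangent orbits --- I would invoke the Young-flattening machinery of \cite{LAN-TEI}, i.e.\ replace the catalecticant by a suitable Koszul-type flattening whose rank is strictly larger and so yields the sharper bound on $rk_\mathbb{C}(f)$.

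The hard case will be the conic-plus-tangent line, where the gap $rk_\mathbb{C}-\underline{rk}_\mathbb{C}$ reaches $2$: the catalecticant only gives $\underline{rk}_\mathbb{C}\ge 3$, and to force $rk_\mathbb{C}\ge 5$ one must set up the correct Young flattening and compute its rank, then match this with an explicit five-term Waring decomposition. Once all upper and lower bounds have been assembled one verifies entry by entry that they coincide, producing Table 2 as stated.
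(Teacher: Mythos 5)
First, a point of comparison: the paper does not actually prove this statement --- Theorem \ref{landsberg-teitler} is quoted from Landsberg--Teitler (and ultimately Segre), and Table 2 is used only as an input for the real classification --- so your proposal has to stand on its own, and as written it contains a genuine gap. The decisive flaw is the claim that in the orbits where rank exceeds border rank (double line plus line, cusp, conic plus tangent line) a Koszul/Young flattening ``yields the sharper bound on $rk_{\mathbb{C}}(f)$''. Any flattening-type bound, catalecticant or Young flattening alike, is the rank of a matrix depending linearly on $f$, hence lower semicontinuous, hence a lower bound for the \emph{border} rank. In precisely those three orbits the border ranks are $2$, $3$, $3$, so no flattening can certify the ranks $3$, $4$, $5$. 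The arguments actually needed are of a different nature: $x^2y$ is a binary form, so Sylvester (Comas--Seiguer) gives rank $3$; for the cusp and for $y(x^2+yz)$, Landsberg--Teitler argue via apolarity and the geometry of the singular locus (which zero-dimensional schemes can lie in the apolar ideal), very much in the spirit of the polar-conic/Hessian argument this paper itself runs over $\mathbb{R}$ in \S 6.3--6.4. Without an argument of that kind your plan cannot reach the entry $rk_{\mathbb{C}}\bigl(y(x^2+yz)\bigr)=5$, which you yourself identify as the hard case.

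A smaller but real inaccuracy: the catalecticant $S^1V^{\vee}\to S^2V$ of a ternary cubic is a $3\times 6$ matrix, so its rank is at most $3$ and it cannot ``pin down the border rank in almost every orbit''; it only separates border ranks $1,2,3$ and says nothing about the five orbits with border rank $4$. To certify $\underline{rk}_{\mathbb{C}}(f)=4$ you must show that $f$ lies off the quartic hypersurface $\sigma_3(v_3(\mathbb{P}^2))$, i.e.\ that the Aronhold invariant does not vanish (equivalently, that the Pfaffian of the appropriate Young flattening is nonzero), combined with the fact that the generic rank of plane cubics is $4$, so the border rank never exceeds $4$. With these two repairs --- the Aronhold invariant for the border-rank-$4$ entries, and apolarity/singularity arguments rather than flattenings for the three entries where rank exceeds border rank --- your orbit-by-orbit scheme of explicit decompositions and explicit degenerating families would indeed reproduce Table 2.
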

 \begin{table}[htb] \centering
 \begin {tabular}{llllll}
 \textbf{Description}&\textbf{normal form}&\textbf{$rk_{\mathbb{C}}$}&\textbf{$\underline{rk}_{\mathbb{C}}$}&\textbf{Hessian}&\textbf{corr.} \\ 
  & & & & &\textbf{real} \\ 
  & & & & &\textbf{cases} \\ \hline
  1)triple line&$x^3$&1&1& &1 \\ \hline
  2)three\\concurrent lines&$xy(x+y)$&2&2& &2,3\\ \hline  
  3)double line+\\line&$x^2y$&3&2& &4 \\ \hline  
  4)conic+sec. line &$x(x^2+yz)$&4&4&conic+line & 5,6,7 \\ \hline
  5)conic+tangent\\ line&$y(x^2+yz)$&5&3&triple line& 8\\ \hline
  6)irred. Fermat &$y^2z-x^3-z^3$&3&3&triangle &9,10\\ \hline
  7)nodal &$y^2z-x^3-x^2z$&4&4&nodal& 14,15\\ \hline
  8)cusp &$y^2z-x^3$&4&3&2 lines+line &13\\ \hline
  9)smooth for $\lambda \neq -\frac{1}{2},0,1$&$x^3+y^3+z^3+6\lambda xyz$&4&4&smooth & 11\\ \hline  
  10)triangle &$xyz$&4&4&triangle &12,16\\ \hline    
 \end{tabular}
\caption{Ranks and border ranks of plane cubic curves on $\mathbb{C}$}\label{tabrank}
\end{table}

\section{Apolarity}
Let $V$ be a real vector space of dimension n+1 and $V^{\vee}$ its dual space.\\
Following \cite{DOL1} a homogeneous form $\Phi\in S^k(V)$ is 
apolar to a homogeneous form $F\in S^d(V^\vee)$ if $\Phi$ belong
to the kernel of following map:\\
\begin{displaymath}
 ap_F^{k}:S^k(V)\rightarrow S^{d-k}(V^\vee)
\end{displaymath}
given by
\begin{displaymath}
 \Phi\longmapsto P_{\Phi}(F)
\end{displaymath}
where $P_{\Phi}(F)=\Phi\cdot F$ is the contraction operator.\\
This map is called the apolarity map.
The kernel of the apolarity map is the linear space 
(denoted by $AP_{k}$ in \cite{DOL-KAN}) of forms which are apolar to $F$ of degree k.\\
\begin{definition}(cfr. \cite{IAR-KAN})
The matrix Cat(F)  of the above linear map $ap_{F}^{k}$ with respect to two basis of monomials of $S^{k}V$ and 
$S^{d-k}(V^\vee)$ is called the k-th catalecticant matrix of the
homogeneous form $F$ and if $n=2k$ the determinant 
of this matrix is the catalecticant matrix of $F$.\\
\end{definition}
In particular, for  dimV=2, that is for homogeneous polynomials over $\mathbb{P}^1$,after taking an
identification $\wedge^2V=\mathbb{R}$, V can be identified with its dual space $V^{\vee}$ and 
if $f=(ax+by)^d\in S^{d}V$ and $g=(cx+dy)^d\in S^{d}V$ 
the contraction operator of $f$ and $g$ will be $(ad-bc)^d$ up to scalars, and this extends by linearity to every 
$f,g\in S^dV$.\\
Let $f$ be a nonsingular cubic in the projective plane $\mathbb{P}^2(\mathbb{R})$, defined by a homogeneous cubic equation $f(x,y,z)=0$.\\
To find the degenerate polar conic  of $f$, let us write the equation of the polar conic $P_{Y}(f)$ with
respect to the  point Y=$(x_0,y_0,z_0)$ and the basis fixed to identify V and $V^{\vee}$ as:
$$
P_{Y}(f)=x_0\frac{\partial f}{\partial x}+y_0\frac{\partial f}{\partial y}+z_0\frac{\partial f}{\partial z}.
$$

This polar conic splits as a pair of lines if 
\begin{displaymath}
 H(f)=\begin{vmatrix}
    \frac{\partial ^{2}f}{\partial x^2} & 
    \frac{\partial ^{2}f}{\partial xy} & 
    \frac{\partial ^{2}f}{\partial xz} \\
    \frac{\partial ^{2}f}{\partial yx} & 
    \frac{\partial ^{2}f}{\partial y^2} & 
    \frac{\partial ^{2}f}{\partial yz} \\
    \frac{\partial ^{2}f}{\partial zx} & 
    \frac{\partial ^{2}f}{\partial zy} & 
    \frac{\partial ^{2}f}{\partial z^2} 
   \end{vmatrix}=0.
\end{displaymath}
So there are infinitely many points Y such that the polar conics degenerate in two lines and the locus
of such points is given by the above equation that is a cubic curve called the Hessian of $f$.
\begin{definition}
The Hessian curve $H(f)$ of a plane cubic curve $f=0$ is the plane cubic curve defined 
by the equation $H(f)=0$, where $H(f)$ is the determinant
of the matrix of the second partial derivatives of $f$.\\
The inflection points of $f=0$ are the nine points of $\{f=0\}\cap \{H(f)=0\}$.
\end{definition}
The Hessian of a cubic polynomial $F$ is a covariant of $F$(see for example \cite{DOL} pag. 68).

\section{Table of SL(2)-orbits of real binary cubics}
Consider the catalecticant matrix of the cubic binary form 
$f=Ax^3+3Bx^2y+3Cxy^2+Dy^3$ that can be written as 
\begin{equation}
 \left(
\begin{matrix}
 A&B&C\\
 B&C&D\\
\end{matrix}
\right)
\end{equation}
and the discriminant of $f$\\
\begin{displaymath}
 \Delta(f)=\left\bracevert
\begin{array}{cc}
 A&C\\
 B&D\\
\end{array}
\right\bracevert^2-4
\left\bracevert
\begin{array}{cc}
 B&C\\
 C&D\\
\end{array}
\right\bracevert
\left\bracevert
\begin{array}{cc}
A&B\\
B&C\\
\end{array}
\right\bracevert.
\end{displaymath}

Over $\mathbb{R}$, the discriminant $\Delta$ is positive or negative corresponding to $f$ having one real root or
three real distinct roots, respectively.\\
The table over $\mathbb{R}$ is the following (see Table 3)( \cite {COM-OTT}):\\
\begin{table}[htb] \centering
 \begin {tabular}{lllll}
 \textbf{Description}&\textbf{normal form}&\textbf{rk}&\textbf{rk catalec}&{$\Delta$}\\ \hline
  $1)C_{3}$&$x^3$&1&1&  \\ \hline  
  $2)\mathbb{P}^3\setminus Tan(C_{3})_{+}$&$x^3+y^3$&2&2&$>0$\\ \hline
  $3)\mathbb{P}^3\setminus Tan(C_{3})_{-}$&$x(x^2-y^2)$&3&2&$<0$\\ \hline
  $4)Tan(C_{3})\setminus C_{3}$&$x^2y$&3&2&0\\ \hline     
 \end{tabular}
 \caption{Ranks of binary cubics over $\mathbb{R}$} \label{tabrank}
\end{table}
\begin{proposition}
Cases 1,2,3,4 of Table 1 are settled.
\end{proposition}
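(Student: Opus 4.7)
The strategy is to reduce each of cases 1 through 4 to the binary classification given in Table 3, since every normal form listed there involves only the variables $x$ and $y$. The plan has three steps.

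First, I would argue that the real rank of a form $f(x,y)\in S^{3}V$ that is effectively binary is the same when viewed inside $S^{3}W$ for $W$ of dimension three. Any real Waring decomposition $f=\sum \lambda_{i}l_{i}^{3}$ with ternary $l_{i}$ can be projected to the plane $\{z=0\}$ without increasing the number of summands (and cubes of linear forms that become zero after projection can simply be dropped); conversely a binary decomposition is automatically ternary. Hence for cases 1--4 it suffices to invoke Table 3. Case 1 ($x^{3}$) matches row 1; case 2 ($x^{3}+xy^{2}$, discriminant $>0$) matches row 2; case 3 ($x^{3}-xy^{2}$, discriminant $<0$) matches row 3; and case 4 ($x^{2}y$, discriminant $0$) matches row 4. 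This yields the claimed values $1,2,3,3$ for $rk_{\mathbb{R}}$.

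Second, I would handle the border ranks. In cases 1, 2, 3 the catalecticant (2nd row of Table 3) has rank equal to the real rank, so a fortiori $\underline{rk}_{\mathbb{R}}\geq rk_{\text{catalec}}$ forces equality with $rk_{\mathbb{R}}$ via the sandwich $rk_{\text{catalec}}\leq \underline{rk}_{\mathbb{R}}\leq rk_{\mathbb{R}}$. For case 4 the catalecticant has rank $2$, giving the lower bound $\underline{rk}_{\mathbb{R}}(x^{2}y)\geq 2$, and the standard identity
\begin{equation*}
x^{2}y=\lim_{\epsilon\to 0}\frac{1}{3\epsilon}\bigl[(x+\epsilon y)^{3}-x^{3}\bigr]
\end{equation*}
exhibits it as a real limit of forms of real rank $2$, so $\underline{rk}_{\mathbb{R}}(x^{2}y)=2$, matching Table 1.

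Third, I would note that the claim that these four orbits exhaust the $SL(3,\mathbb{R})$-action on cubics whose vanishing locus is a cone (equivalently, forms in two variables up to change of coordinates) is immediate from the $SL(2,\mathbb{R})$ classification of real binary cubics already summarized in Table 3. The only mild obstacle is the reduction argument in the first step (that passing from binary to ternary does not decrease $rk_{\mathbb{R}}$ or $\underline{rk}_{\mathbb{R}}$); once that is in place, every entry in rows 1--4 of Table 1 is read directly off Table 3 and the explicit limit above. No case analysis involving the Hessian is needed here because the Hessian of each of these forms vanishes identically, which is consistent with their blank Hessian entry in Table 1.
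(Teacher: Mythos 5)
Your overall route (reduce cases 1--4 to the binary classification of Table 3 via the observation that setting $z=0$ in any ternary decomposition of a form in $x,y$ yields a binary decomposition with no more summands) is sound and is essentially the paper's approach, which simply invokes Table 3 and comments on the tangent-line geometry of $x^2y$; your explicit limit for $x^2y$ and the remark on the vanishing Hessian are fine as well.

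However, your border-rank step contains a genuine error in case 3. You assert that ``in cases 1, 2, 3 the catalecticant has rank equal to the real rank,'' but Table 3 itself records rk catalec $=2$ for $x(x^2-y^2)$ while its real rank is $3$. So your sandwich $rk_{\text{catalec}}\leq \underline{rk}_{\mathbb{R}}\leq rk_{\mathbb{R}}$ only gives $\underline{rk}_{\mathbb{R}}\bigl(x(x^2-y^2)\bigr)\geq 2$, and the entry $\underline{rk}_{\mathbb{R}}=3$ of Table 1 (the one place in rows 1--4 where the real border rank differs from the complex one, which is $2$) is left unproved. To close the gap you need an argument that is genuinely real: every real binary cubic of real rank at most $2$ is, up to scale, of the form $\lambda_1 l_1^3+\lambda_2 l_2^3$ and hence has at most one real root, i.e.\ discriminant $\Delta\geq 0$ in the convention of Section 4; since $\{\Delta\geq 0\}$ is closed, any limit of real-rank-$\leq 2$ forms also satisfies $\Delta\geq 0$, whereas $x(x^2-y^2)$ has three distinct real roots and $\Delta<0$. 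This gives $\underline{rk}_{\mathbb{R}}\geq 3$, and combined with $rk_{\mathbb{R}}=3$ settles the case. With that substitution (and keeping your catalecticant bound for cases 1, 2 and your limit for case 4), the argument is complete.
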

\begin{proof}
The line 4) of the table 3 means that the normal form $x^2y$ belongs to the tangent of $x^3$ so every point 
on this tangent correspond to a polynomial of the type $x^3+x^2y$ with a double root.

\end{proof}
\section{Real De Paolis algorithm}
There is an algorithm given by De Paolis in XIX century (1886), that gives an useful method to find a decomposition
of a general plane cubic curve as a sum of at most 4 cubes when the first cubic form $l_{0}^3$ is given where $l_{0}$
is a line such that $l_{0}\bigcap H(f)$ is given by three real points.
Let $C_{3}$ a real cubic curve defined by a cubic polynomial $F\in S^{3}\mathbb{R}^3$.\\
It is known that (classic theorem):
\begin{theorem}(\cite{EC}Book 3, Cap.III)
 A nonsingular real cubic has exactly three real inflection points and these points are collinear.\\
\end{theorem}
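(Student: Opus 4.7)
My plan is to treat $C=V(f)$ and its Hessian $V(H(f))$ as real plane cubics, use $\mathbb{C}$ and Bezout to count the flexes abstractly, use mod-$2$ intersection theory in $\mathbb{RP}^2$ to find at least one real flex, and then use the elliptic-curve group law to identify the real flexes as a subgroup of the $3$-torsion.

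First I will verify that, over $\mathbb{C}$, the inflection points of a nonsingular cubic $C$ are exactly the $9$ simple intersection points of $C$ with $V(H(f))$. Nonsingularity of $f$ implies both that $H(f)\not\equiv 0$ (a smooth cubic is not a cone) and that the $9$ Bezout intersection points of $V(f)$ and $V(H(f))$ are distinct: at a flex $P$ of a smooth cubic the tangent meets $C$ with multiplicity $3$ (the maximum on a cubic), and a standard local computation shows the Hessian vanishes simply at $P$. Because $f,H(f)\in\mathbb{R}[x,y,z]$, complex conjugation acts on the set of $9$ flexes; non-real flexes appear in conjugate pairs, so the number $r$ of real flexes is congruent to $9\pmod 2$, i.e.\ $r$ is odd.

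To get $r\ge 1$ I will use mod-$2$ intersection theory in $\mathbb{RP}^2$. Every nonzero real cubic polynomial has at least one real irreducible factor of odd degree, since the $\mathbb{R}$-factorization has total degree $3$ and cannot consist of only even-degree factors, and the real vanishing locus of such a factor contains a \emph{pseudo-line}, i.e.\ a $1$-cycle representing the nontrivial class in $H_1(\mathbb{RP}^2,\mathbb{Z}/2)$. Applying this to both $f$ and $H(f)$, and using that the mod-$2$ self-intersection of that class is $1$, the two pseudo-lines meet in an odd number of real points, producing at least one real flex $O$. The final step is the group law: taking $O$ as the identity, the chord-and-tangent construction makes $C$ into a real elliptic curve in which three points are collinear if and only if they sum to $0$. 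Under this identification the nine complex flexes become the $3$-torsion subgroup $C[3]\cong(\mathbb{Z}/3)^2$, so the real flexes are $C(\mathbb{R})[3]$. Since $C(\mathbb{R})$ is a closed $1$-dimensional Lie subgroup of the compact plane $\mathbb{RP}^2$, it is isomorphic either to $S^1$ or to $S^1\times\mathbb{Z}/2$, whose $3$-torsion is in both cases cyclic of order $3$. Hence $r=3$, and the three real flexes form a subgroup of order $3$ whose elements sum to $0$, so they are collinear.

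The main obstacle is not a single computation but assembling the pieces cleanly: transversality of $V(f)\cap V(H(f))$ at each flex, the existence of a real pseudo-line in $V(H(f))$ when the Hessian is reducible or even a product of three lines (as for the Fermat cubic), and the identification of flexes with $C[3]$ via the chord-and-tangent law. A completely classical alternative, closer in spirit to \cite{EC}, would exhibit the Hesse configuration of the nine complex flexes directly and track the complex conjugation action on its $12$ collinearity lines; but the group-theoretic route above yields both the exact count $r=3$ and the collinearity at a single stroke.
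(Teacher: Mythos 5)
The paper does not actually prove this statement: it is quoted as a classical theorem with a citation to Enriques--Chisini, so your argument is filling a gap rather than paralleling an existing proof. Your proof is correct in substance. The chain Bezout $+$ simple vanishing of the Hessian at each flex of a smooth cubic gives nine distinct complex flexes; conjugation-invariance then makes the number of real flexes odd --- note that this already gives $r\ge 1$, so your entire mod-$2$/pseudo-line step is redundant and can be deleted (it is not wrong, just unnecessary, and it is the fussiest part to make rigorous when $H(f)$ is reducible or singular). With a real flex $O$ as origin, the flexes are exactly $C[3]\cong(\mathbb{Z}/3)^2$, the real locus is a compact one-dimensional group isomorphic to $S^1$ or $S^1\times\mathbb{Z}/2$, and in either case its $3$-torsion is cyclic of order $3$; the three real flexes $\{O,T,2T\}$ sum to zero, hence are collinear. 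Two small points worth stating explicitly if you write this up: $H(f)\not\equiv 0$ is immediate for a smooth plane cubic (for ternary forms vanishing Hessian does force a cone; the Gordan--Noether pathologies only occur in more variables), and smoothness rules out hyperflexes since a line cannot meet an irreducible cubic with multiplicity $4$. By contrast, the classical route closer to \cite{EC} uses the Hesse configuration of the nine flexes and the action of conjugation on its twelve lines (or a Sylvester--Gallai-type argument showing the real flexes must be collinear, hence at most three); that argument is more elementary and purely projective, while your group-theoretic route buys the exact count $r=3$ and the collinearity in one stroke.
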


\begin{proposition}
 Let $F$ be a real ternary cubic and let $l_0$ be a line such that $l_0\cap H(F)$ consists of three distinct points $P_1,P_2,P_3$.
 Then there are defined three real lines $l_i\ni P_i$ (by abuse of notation I will write $P \in l_{i}$ to mean that P belongs
 to the locus defined by $l_{i}=0$) 
 and real scalars $c_i$ (i=0,1,2,3) such that
 $F=\sum_{i=0}^3c_il_i^3$.\\ 
\end{proposition}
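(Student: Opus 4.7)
The plan is to use apolarity: by the Apolarity Lemma, a decomposition $F = \sum_{i=0}^3 c_i l_i^3$ with real $c_i$ and real $l_i \in V^\vee$ exists as soon as one exhibits four real points $[l_0], [l_1], [l_2], [l_3] \in \mathbb{P}(V^\vee)$ whose degree-$2$ vanishing ideal is contained in $F^\perp \cap S^2 V$. Since $[l_0]$ is prescribed by hypothesis, the task reduces to producing the three remaining real points with the apolarity property, and I would obtain them as the base locus (together with $[l_0]$) of a naturally defined pencil of apolar conics.

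First I would work inside $F^\perp \cap S^2 V$. Under the hypothesis that $H(F) \not\equiv 0$ (implicit in the existence of three distinct points in $l_0 \cap H(F)$), the catalecticant $ap_F^2 : S^2 V \to V^\vee$ has full rank, so this kernel is $3$-dimensional. Imposing the single linear condition of passing through $[l_0]$ cuts out a pencil $\mathcal{P}$ of apolar conics, whose base locus will be the candidate set $\{[l_0], [l_1], [l_2], [l_3]\}$.

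The core step is to identify the singular members of $\mathcal{P}$ and to verify their reality. A rank-one tensor $\alpha \cdot \beta \in S^2 V$ is apolar to $F$ if and only if $\alpha$ is a singular point of the polar conic $P_\beta F$, which forces $\beta \in H(F)$. Combined with the condition that $\alpha \cdot \beta$ vanish at $[l_0]$, which after a possible swap amounts to $l_0(\beta) = 0$, this shows that the singular apolar conics through $[l_0]$ are exactly the three tensors $\alpha_i \cdot P_i$ with $P_i \in l_0 \cap H(F)$ and $\alpha_i$ the (unique) singular point of the real singular conic $P_{P_i} F$. Since $P_1, P_2, P_3$ are three distinct real points by hypothesis, each $\alpha_i$ is real, and each product $\alpha_i \cdot P_i$ factors \emph{explicitly} as a pair of real lines in $\mathbb{P}(V^\vee)$.

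Finally, the four base points of $\mathcal{P}$ are the pairwise intersections of these three real line-pairs, hence all real; label them $[l_0], [l_1], [l_2], [l_3]$. For each $i$, the line in $\mathbb{P}(V^\vee)$ dual to $P_i$ contains $[l_0]$ (because $l_0(P_i) = 0$) and meets the base locus in exactly one further point $[l_i]$, yielding $l_i(P_i) = 0$, i.e.\ $P_i \in l_i$ in $\mathbb{P}(V)$, as required. The degree-$2$ part of the vanishing ideal of these four points coincides with $\mathcal{P} \subset F^\perp$, so the Apolarity Lemma produces real scalars $c_i$ with $F = \sum_{i=0}^3 c_i l_i^3$. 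The main obstacle is the genericity control: one must verify that $ap_F^2$ is indeed surjective, that $\mathcal{P}$ has four distinct simple base points, and that no three of them are collinear, all of which should be traced back to the distinctness of $P_1, P_2, P_3$ and to mild general-position hypotheses on $l_0$ relative to $F$.
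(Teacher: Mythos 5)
Your construction is, at bottom, the dual (apolarity) formulation of the very argument the paper uses, which is the classical De Paolis construction. The correspondence is exact: your singular members of the pencil $\mathcal{P}$ are the tensors $P_i\cdot Q_i$, where $Q_i$ is the singular point of the polar conic $P_{P_i}(F)$ --- precisely the points the paper computes --- and the base points of $\mathcal{P}$ other than $[l_0]$ are the paper's lines $l_1,l_2,l_3$ (e.g.\ the two base points lying on the dual line of $Q_1$ are $[l_2],[l_3]$, matching the paper's relation $Q_1\in l_2\cap l_3$). The reality mechanism is also the same in both: the singular point of a real singular conic is real. Where you genuinely differ is in how existence of the scalars $c_i$ is obtained: the paper constructs the lines as explicit joins ($l_1=\langle P_1,Q_2\rangle$, etc.) and then simply solves the linear system $F=\sum_i c_i l_i^3$, essentially taking its solvability for granted, whereas you derive it from the Apolarity Lemma once the degree-two part of the ideal of the four points sits inside $F^{\perp}$. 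That is a cleaner justification of the final step, at the price of having to control the geometry of the apolar net.

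That price is where your write-up has a real hole: you defer ``surjectivity of $ap_F^2$, four distinct simple base points, no three collinear'' to \emph{mild general-position hypotheses on $l_0$ relative to $F$}, which are not in the statement, so as written you prove a weaker proposition. Part of this can be closed from the stated hypothesis alone: three distinct points in $l_0\cap H(F)$ force $H(F)\not\equiv 0$, and for ternary cubics a vanishing Hessian characterizes cones, so $F$ is not a cone, the map $V\to S^2V^{\vee}$, $v\mapsto P_v(F)$, is injective, and hence $ap_F^2$ is surjective and $\dim (F^{\perp})_2=3$. What still needs an argument (and is equally glossed over in the paper's own proof) is the nondegeneracy of the configuration: that each $P_{P_i}(F)$ has rank exactly $2$ (so $Q_i$ is a unique point --- this fails when $P_i$ is a singular point of $F$), that $[l_0]$ is not a base point of the whole net so that $\mathcal{P}$ is really a pencil, and that the resulting four base points are distinct with no three collinear so that the ideal is generated by $\mathcal{P}$. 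You should either trace these back to the hypothesis on $l_0\cap H(F)$ or state them explicitly; invoking unspecified genericity leaves the proof incomplete for the proposition as stated.
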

\begin{figure}[htbp]\label{depaolis}
 \includegraphics[scale=0.3]{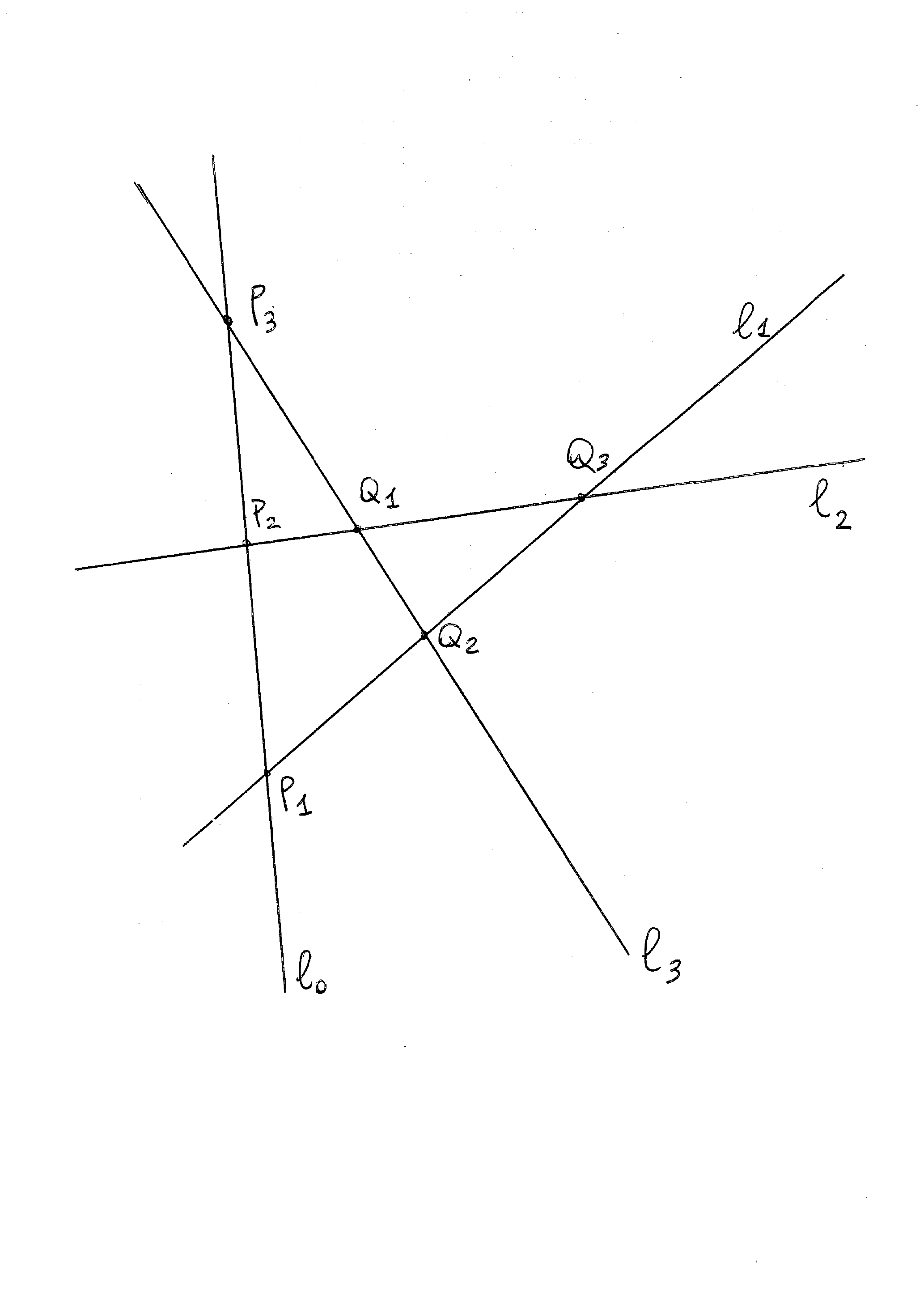}
 \caption{description of De Paolis algorithm}
\end{figure}
The algorithm to find this decomposition is described in the proof.
\begin{proof}
 The singular point of a real singular conic is always real.\\
The algorithm is:\\
INPUT F a real plane cubic and $l_0$ satisfying the assumptions.\\
$l_0$ a line such that $l_0\cap H(F)$ consist of three distinct points.\\
The line $l_0$ joining three real flexes for example (see Figure 1).\\
COMPUTE $l_0\cap H(F) = \left\{ P_1,P_2,P_3 \right\}$.\\
COMPUTE $Q_{i}$ the singular point of the polar conic $P_{P_i}(F)$ for i=1,2,3.\\
COMPUTE $l_1=<P_1,Q_2>$, $l_2=<P_2,Q_1>$, $l_3=<P_3,Q_1>$.\\
SOLVE the linear system $F=\sum_{i=0}^{3} c_{i} l_{i}^{3}$.\\
OUTPUT lines $l_1,l_2,l_3$ and numbers $c_{i}\in \mathbb{R}$,  i=0,1,2,3, such that $F=\sum _{i=0}^{3} c_{i}l_{i}^{3}$ indeed
\begin{displaymath}
P_{P_1}(F)=c_2l_2^2+c_3l_3^3 \quad hence \quad Q_1\in{l_2,l_3}
\end{displaymath}
\begin{displaymath}
P_{P_2}(F)=c_1l_1^2+c_3l_3^3 \quad hence \quad Q_2\in{l_1,l_3}
\end{displaymath}
\begin{displaymath}
P_{P_3}(F)=c_1l_1^2+c_2l_2^3 \quad hence \quad Q_3\in{l_1,l_2}
\end{displaymath}
moreover the points $Q_i$ are real, because singular points of a real singular conic are always real, 
hence the lines $l_i$ are real.\\
This algorithm tells us that for real plane cubics there is only one typical rank which is 4.\\
\end{proof}

\subsection{Hesse pencil}
Every smooth plane cubic is projectively equivalent to a member of the Hesse pencil
\begin{equation}
F_{\lambda }=x^3+y^3+z^3+6\lambda xyz=0.
\end{equation}
The nine base points of the pencil are the flexes of every smooth members of the family.
Three base points are reals, namely $(1,-1,0),(1,0,-1),(0,1,-1)$, moreover there are 
three pairs of conjugate base points.The Hessian of each member of the Hesse pencil is still a member in the 
Hesse pencil.There are four singular members of the Hesse pencil, for
$\lambda=\infty$ (real triangle)
and for $\lambda=-\frac{1}{2}$, $-\frac{\tau}{2}$, $-\frac{\tau^2}{2}$
(imaginary triangle composed by a real line and a pair of complex conjugate lines), where
$\tau$ is a primitive cube root of unity.
Working on real numbers, we will consider just the value $\lambda=-\frac{1}{2}$.\\
\begin{figure} \label{figura1}
 \includegraphics[scale=0.3]{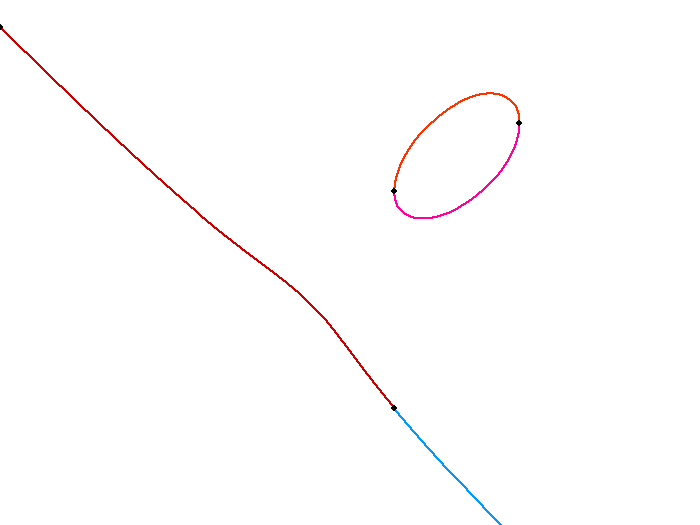}
 \caption{two components $\lambda > -\frac{1}{2}$}
\end{figure}
\begin{figure} \label{figura2}
 \includegraphics[scale=0.3]{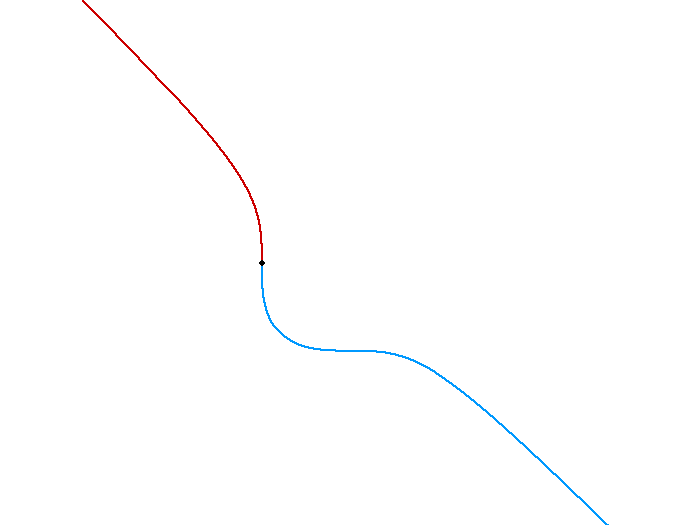}
 \caption{one component $\lambda <-\frac{1}{2}$}
\end{figure}
 \begin{figure} \label{figura3}
  \includegraphics[scale=1]{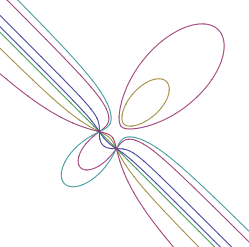}
  \caption{Hesse pencil}
 \end{figure}
We apply De Paolis algorithm to the Hesse pencil (see Figure 4 for the general pencil and Figure 2 and 3 
for the pencil with two components and one component respectively):\\
\begin{equation}
F_{\lambda }=x^3+y^3+z^3+6\lambda xyz=0
\end{equation}
with the condition of non singularity $1+8\lambda^3 \neq 0$. \\ 
$$S=\lambda-\lambda^4=\lambda(1-\lambda)(\lambda^2+\lambda+1)$$ (see \cite{WEB},vol II,ch.12) is, up to scalar,
the invariant of degree four of plane cubics.\\
The other invariant derived from the invariant $S$  is an invariant of the sixth order
in the coefficients, that for the canonical form is 
$$T=1-20\lambda^3-8\lambda^6$$
up to scalar, so for $S=0$ the curve is an equianharmonic cubic
and it is in the orbit of the Fermat cubic, only if $\lambda=0$,or 
$\lambda=1$ on the real numbers.
The discriminant for the Hesse pencil  (4) is (see \cite{SAL}pag. 189 where is denoted by R)
\begin{equation}
\Delta=T^2+64S^3=(1+8\lambda^3)^3.
\end{equation}
We get $\Delta \neq 0$ for $\lambda \neq -\frac{1}{2}$.
\begin{proposition}
Let $F$ a smooth real cubic curve with $S=0$.\\
If $T> 0$ then
$rk_{\mathbb{R}}(F)=3$ and this is the case 9) of Table1 and if $T< 0$ then $rk_{\mathbb{R}}(F)=4$ 
and this is the case 10) of Table 1.
\end{proposition}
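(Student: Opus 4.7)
The plan is to use the parametrization by the Hesse pencil: the real solutions of $S=\lambda(1-\lambda)(\lambda^2+\lambda+1)=0$ are $\lambda=0$, giving $T=1>0$ and $F_0=x^3+y^3+z^3$, and $\lambda=1$, giving $T=-27<0$ and $F_1=x^3+y^3+z^3+6xyz$. Thus the statement splits into computing $rk_{\mathbb{R}}$ for each of these two normal forms.

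For $T>0$, the form $F_0$ is manifestly a sum of three real cubes, so $rk_{\mathbb{R}}(F_0)\le 3$; combined with the trivial bound $rk_{\mathbb{R}}(F_0)\ge rk_{\mathbb{C}}(F_0)=3$ (row 6 of Table 2, since $F_0$ is equianharmonic), one obtains $rk_{\mathbb{R}}(F_0)=3$. The Hessian is $xyz$, a real triangle, so this is case 9 of Table 1.

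For $T<0$, I would establish $rk_{\mathbb{R}}(F_1)\le 4$ via the construction underlying De Paolis' algorithm (Proposition 5.2) applied to $l_0=x+y+z$ and the three collinear real flexes $(1,-1,0),(1,0,-1),(0,1,-1)$. The hypothesis of Proposition 5.2 is not literally met, since $l_0$ is a factor of $H(F_1)=(x+y+z)(x^2+y^2+z^2-xy-yz-zx)$, but the recipe still produces three real lines through the $P_i$: a brief computation shows that each polar conic $P_{P_i}(F_1)$ is singular at the common point $(1,1,1)$, and solving the resulting linear system for the four cube coefficients exhibits an explicit real decomposition of $F_1$.

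The matching lower bound is the crux. I would argue by contradiction: if $F_1=\sum_{i=1}^{3}a_i\,l_i^3$ with $a_i\in\mathbb{R}$ and $l_i$ real, then the $l_i$ must be linearly independent (otherwise $F_1$ would depend on at most two variables and be singular), and absorbing each real cube root $a_i^{1/3}$ into $l_i$ reduces to $F_1=\sum \tilde l_i^3$. A real change of basis then carries $F_1$ into $F_0$, so $F_0$ and $F_1$ would be $GL_3(\mathbb{R})$-equivalent. Since the Hessian is a covariant of the cubic, this forces $H(F_0)=xyz$ and $H(F_1)=(x+y+z)(x^2+y^2+z^2-xy-yz-zx)$ to lie in the same $GL_3(\mathbb{R})$-orbit, which is impossible: the quadratic factor equals $\tfrac12((x-y)^2+(y-z)^2+(z-x)^2)$, which is positive-definite and hence $\mathbb{R}$-irreducible, so $H(F_1)$ has only two real irreducible components against $H(F_0)$'s three. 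This is the main obstacle: because $F_1$ is complex-equivalent to $F_0$ one has $rk_{\mathbb{C}}(F_1)=3$, and no complex-valued invariant can detect the jump over $\mathbb{R}$; one genuinely needs the covariance of the Hessian together with the $\mathbb{R}$-irreducibility of the positive-definite conic to supply a real obstruction separating the two orbits.
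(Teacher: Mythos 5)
Your argument is correct, but the crux --- ruling out real rank $3$ when $T<0$ --- is handled by a genuinely different mechanism than in the paper. The paper stays entirely with the complex Waring decomposition: $S=0$ makes $F$ a sum of three independent linear cubes over $\mathbb{C}$, conjugation either fixes all three forms or fixes one and swaps a conjugate pair, and the second case yields the normal form $x^3+(y+iz)^3+(y-iz)^3=x^3+2y^3-6yz^2=x^3-(y+z)^3-(y-z)^3+4y^3$ with $rk_{\mathbb{R}}=4$, the lower bound resting (implicitly) on the essential uniqueness of the rank-three complex decomposition, so that no real rank-three decomposition can exist. You instead reduce to the Hesse normal forms $\lambda=0,1$, take the upper bound from the De Paolis construction --- your recipe at $\lambda=1$ does reproduce the paper's Proposition 5.3, with $c_0=\tfrac13$, $c_1=c_2=c_3=\tfrac19$ --- and get the lower bound from covariance of the Hessian: a real rank-three expression would make $F_1$ real-equivalent to the Fermat cubic, forcing $xyz$ and $x^3+y^3+z^3-3xyz$ into the same real orbit up to scalar, which their real factorization types forbid. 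Your route avoids any appeal to uniqueness of the complex decomposition and uses only tools the paper already records (Hessian covariance in \S 3, the Hesse reduction, the even weight of $T$ making its sign a real invariant); the paper's route, in exchange, needs no normal-form reduction and produces the conjugate-pair decomposition directly. One small correction: the quadratic factor $x^2+y^2+z^2-xy-yz-zx=\tfrac12\left((x-y)^2+(y-z)^2+(z-x)^2\right)$ is positive semidefinite of rank two (it vanishes at $(1,1,1)$), not positive definite; it is nevertheless irreducible over $\mathbb{R}$, since a product of two distinct real linear forms changes sign and a square of a linear form has rank one, so your count of real components of $H(F_1)$, and hence the contradiction, still stands.
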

\begin{proof}
If $S=0$ over $\mathbb{C}$ then the cubic is a sum of three independent linear powers.\\
There are two cases:
\begin{enumerate}
   \item all the linear forms are real
   \item one is real and two complex conjugate
\end{enumerate}
The second case is given  by the cubic form\\
$$
f=x^3+(y+iz)^3+(y-iz)^3=x^3+2y^3-6yz^2=x^3-(y+z)^3-(y-z)^3+4y^3
$$
with $rk_{\mathbb{R}}(f)=4$ and $rk_{\mathbb{C}}(f)=3$.
\end{proof}

\begin{proposition}
The Waring decomposition of Hesse pencil for $\lambda \neq -\frac{1}{2},0$ is 
$$
F_{\lambda }= c_{0}(x+y+z)^3+c_{1}((1+\lambda)x-\lambda y-\lambda z)^3+c_{2}(-\lambda x+(1+\lambda )y-\lambda z)^3
+c_{3}(-\lambda x-\lambda y+(1+\lambda)z)^3
$$
where $c_{i}$ are described in the proof.
For $\lambda =0$ we have case 9) of Table 1.
\end{proposition}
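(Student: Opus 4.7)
The plan is to apply the De Paolis algorithm with the auxiliary line
$$l_{0}=x+y+z.$$
This choice is forced by the geometry: the three real base points of the Hesse pencil,
$$P_{1}=(1,-1,0),\qquad P_{2}=(1,0,-1),\qquad P_{3}=(0,1,-1),$$
are the three real flexes of every smooth $F_{\lambda}$, and $l_{0}$ is the unique real line through them.

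First I would verify the hypothesis of the algorithm. Since the Hessian of $F_{\lambda}$ again lies in the Hesse pencil (entry 11 of Table 1), $H(F_{\lambda})$ vanishes at all nine base points; by B\'ezout the intersection $l_{0}\cap H(F_{\lambda})$ therefore consists of exactly $P_{1},P_{2},P_{3}$, which are distinct and real.

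Next I would run the algorithm explicitly. A short computation gives
$$P_{P_{1}}(F_{\lambda})=3(x-y)(x+y-2\lambda z),$$
a reducible real conic whose singular point is $(\lambda,\lambda,1)$; the cyclic symmetry $x\mapsto y\mapsto z\mapsto x$ of $F_{\lambda}$ yields the remaining singular points $(\lambda,1,\lambda)$ and $(1,\lambda,\lambda)$. A single $3\times 3$ determinant then produces one of the required lines, $(1+\lambda)x-\lambda y-\lambda z$, as the line joining $P_{3}$ to $(\lambda,\lambda,1)$; its two cyclic images are the other linear forms appearing in the statement.

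Finally I would solve the linear identity $F_{\lambda}=\sum_{i=0}^{3}c_{i}l_{i}^{3}$. Because $F_{\lambda}$ and the ordered triple $(l_{1},l_{2},l_{3})$ are both cyclically symmetric while $l_{0}$ is fixed, and the four cubes $l_{i}^{3}$ are linearly independent in $S^{3}V$ for $\lambda\neq -\tfrac12$, one is forced to take $c_{1}=c_{2}=c_{3}=:c$. Equating the coefficients of $x^{2}y$ and of $xyz$ on the two sides then reduces everything to a $2\times 2$ linear system, whose solution is
$$c=\frac{1}{(2\lambda+1)^{2}},\qquad c_{0}=\frac{\lambda(\lambda^{2}+\lambda+1)}{(2\lambda+1)^{2}},$$
manifestly well-defined for $\lambda\neq -\tfrac12$. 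The only anticipated obstacle is computational bookkeeping, and the cyclic symmetry collapses the verification to these two scalar equations.
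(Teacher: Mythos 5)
Your construction is the paper's construction: De Paolis with $l_0=x+y+z$, the three real flexes, the factored polar conics with singular points $(1,\lambda,\lambda),(\lambda,1,\lambda),(\lambda,\lambda,1)$, the same three lines and the same coefficients $c_0=\lambda(\lambda^2+\lambda+1)/(2\lambda+1)^2$, $c_1=c_2=c_3=(2\lambda+1)^{-2}$; the cyclic-symmetry reduction of the linear system is a pleasant shortcut. There is, however, one step that fails inside the stated range. The assertion that ``by B\'ezout $l_0\cap H(F_\lambda)$ consists of exactly $P_1,P_2,P_3$'' presupposes that $l_0$ is not a component of the Hessian. Since $H(F_\lambda)=-\lambda(x^3+y^3+z^3)+(1+2\lambda^3)xyz$, it is proportional to $x^3+y^3+z^3-3xyz$ precisely when $2\lambda^3-3\lambda+1=(\lambda-1)(2\lambda^2+2\lambda-1)=0$, i.e.\ for the three real values $\lambda=1$ and $\lambda=\tfrac{-1\pm\sqrt{3}}{2}$ (for $\lambda=1$ this is exactly case 10 of Table 1). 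For those $\lambda$ the line $x+y+z$ divides $H(F_\lambda)$, the intersection $l_0\cap H(F_\lambda)$ is infinite, and the hypothesis of the De Paolis proposition fails for this choice of $l_0$. This is not merely cosmetic in your write-up, because you determine $c_0$ and $c$ from only the $x^2y$ and $xyz$ coefficient equations and invoke De Paolis to guarantee that \emph{some} decomposition on these four lines exists; at the three exceptional values that existence has not been established, so the remaining coefficient equations are left unverified.

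The gap closes with one extra line. By the $S_3$-symmetry of $F_\lambda$, of $l_0$, and of the triple $(l_1,l_2,l_3)$ with equal coefficients, the $x^2z$ equation coincides with the $x^2y$ equation, so the only remaining independent condition is the $x^3$ coefficient: $c_0+c\bigl[(1+\lambda)^3-2\lambda^3\bigr]=\frac{\lambda(\lambda^2+\lambda+1)+(1+3\lambda+3\lambda^2-\lambda^3)}{(2\lambda+1)^2}=\frac{4\lambda^2+4\lambda+1}{(2\lambda+1)^2}=1$, which holds identically. Hence the displayed decomposition is a rational identity valid for every $\lambda\neq-\tfrac12$ (alternatively, multiply through by $(2\lambda+1)^2$ and note that two polynomials in $\lambda$ agreeing for all but finitely many $\lambda$ agree everywhere, which extends your generic De Paolis argument to the excluded values). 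With that verification added, your proof is complete and coincides in substance with the paper's.
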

\begin{proof}
In this proof we apply De Paolis algorithm.\\
The Hessian of (4) is again of this form, that is a curve of the pencil: we have the equation
$$ H(F_\lambda )= -\lambda(x^3+y^3+z^3)+(1+2\lambda^3)xyz=0
$$
and we can choose three real collinear flexes as $P_1=(0,1,-1)$,$P_2=(1,0,-1)$,$P_3=(1,-1,0)$.\\
This flexes belong to the line\\
$$
l_0=x+y+z=0.
$$
Compute the equation of the polar conics $P_{P_i}(F_{\lambda })$ for i=1,2,3:\\
$$P_{P_1}(F_{\lambda })= 3y^2+6\lambda xz-3z^2-6\lambda xy=0$$
$$P_{P_2}(F_{\lambda })= 3x^2+6\lambda yz-3z^2-6\lambda xy=0$$
$$P_{P_3}(F_{\lambda })= 3x^2+6\lambda yz-3y^2-6\lambda xz=0$$
so we get three singular points \\
$$Q_1=(1,\lambda,\lambda)
$$
$$
Q_2=(\lambda,1,\lambda)
$$
$$
Q_3=(\lambda,\lambda,1)
$$
Solving the linear system $$F_{\lambda}=\sum _{i=0}^{3}c_il_i^3$$
we get the value of the coefficients $c_i$.\\
The solution of this system is:
$$c_0=\frac{\lambda(\lambda^2+\lambda+1)}{(2\lambda+1)^2}$$ and
$$c_1=c_2=c_3=\frac{1}{(2\lambda+1)^2}.$$
\end{proof}
In conclusion we settle the cases 9,10,11 of Table 1.
\section{Union of conic and a non-tangent line}
\subsection{Union of imaginary conic and a line}
The cubic has the equation $F=(x^2+y^2+z^2)x$ and its Hessian is 
$$H(F)=(9x^2-y^2-3z^2)(8x).$$
In this case 
$$F_z=2zx=\frac{1}{2}(z+x)^2-\frac{1}{2}(z-x)^2$$
hence
$$F=\frac{1}{6}[(z+x)^3-(z-x)^3]+\phi(x,y)$$
where $\phi(x,y)=x(x^2+y^2)$.\\
A decomposition of $F$ is
$$F=\frac{1}{6}\biggl\{(z+x)^3-(z-x)^3\biggr\}+\frac{1}{2}\biggl\{\frac{1}{3\sqrt{2}}(\sqrt{2}x-y)^3-\frac{1}{3\sqrt{2}}
(-\sqrt{2}x-y)^3\biggr\}$$
so
$$rk_{\mathbb{R}}(F)\leq 4$$
and for \cite{LAN-TEI} $rk_{\mathbb{C}}(F)\geq 4$,then $rk_{\mathbb{R}}(F)=4.$\\
The case 5) of Table 1 is settled.
\subsection{Union  of real conic and a secant line}
In this case the cubic is $F=(x^2+y^2-z^2)y$ and the Hessian is 
$$H(F)=8y(x^2-3y^2-z^2).$$
In this case 
$$F_z=-2zy$$
and like the previous case  we have the decomposition
$$y(y^2+xz)=\frac{1}{96}\biggl\{(4y+x+z)^3+(4y-x-z)^3-2(2y+x-z)^3-2(2y-x+z)^3\biggr\}$$
so
$$rk_{\mathbb{R}}(F)\leq 4$$
and (for \cite{LAN-TEI} or \cite{LAN} ch.10) $rk_{\mathbb{C}}(F)\geq 4$ then $rk_{\mathbb{R}}(F)=4.$\\
The case 7) of Table 1 is settled.
\subsection{Union of a real conic and an external line: the new case of $rk_{\mathbb{R}}= 5$}
In this case the cubic is 
$$
F=(x^2+y^2-z^2)z
$$
and the Hessian of $F$  is 
$$
H(F)=-8z(x^2+y^2+3z^2),
$$
that is the Hessian cubic curve $H(F)=0$ is a imaginary conic plus a line.\\
We can write 
$$
F=x^2z+z(y^2-z^2)=x^2z+\varphi(y,z).
$$
Now 
$$
x^2z=\frac{1}{6}\biggl\{(x+z)^3-(x-z)^3\biggr\}-\frac{1}{3}z^3
$$
so
$$
x^2z+z(y^2-z^2)=\frac{1}{6}\biggl\{(x+z)^3-(x-z)^3\biggr\}-\frac{4}{3}z^3+zy^2
$$
and $\varphi(y,z)=-\frac{4}{3}z^3+zy^2.$\\
But $\varphi(y,z)$ is a binary cubic form with three real roots because
$$zy^2-\frac{4}{3}z^3=z(y-\frac{2}{\sqrt{3}}z)(y+\frac{2}{\sqrt{3}}z)$$
and then (\cite{COM-OTT})
$$rk_{\mathbb{R}}[\varphi(y,z)]=3.$$
Indeed we have
$$
y^2z=\frac{1}{6}\biggl\{(y+z)^3-(y-z)^3-2z^3\biggr\}
$$
so 
$$
zy^2-\frac{4}{3}z^3=\frac{1}{6}\biggl\{(y+z)^3-(y-z)^3\biggr\}-\frac{1}{3}z^3-\frac{4}{3}z^3
$$
that is 
$$
zy^2-\frac{4}{3}z^3=\frac{1}{6}(y+z)^3-\frac{1}{6}(y-z)^3-\frac{5}{3}z^3
$$
and finally the decomposition
$$
F=\frac{1}{6}\biggl\{(x+z)^3-(x-z)^3\biggr\}+\frac{1}{6}(y+z)^3-\frac{1}{6}(y-z)^3-\frac{5}{3}z^3
$$
so 
$$
rk_{\mathbb{R}}(F)\leq 5.
$$
Now we have to prove that the rank of the above cubic can not be smaller than 5.\\
In fact, we have:\\
\begin{theorem}\label{MIO}
The real rank of the reducible cubic given by a real conic plus an external line is 5, that is
$$
rk_{\mathbb{R}}(x^2+y^2-z^2)z=5.
$$
\end{theorem}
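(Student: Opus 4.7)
The plan is to combine the upper bound $rk_{\mathbb{R}}(F)\leq 5$ (which is the explicit decomposition just above) with a matching lower bound $rk_{\mathbb{R}}(F)\geq 5$, proved by an apolarity/fiber argument. Over $\mathbb{C}$ the cubic $F$ lies in the orbit ``conic plus secant line'' (case 4 of Table 2), so $rk_{\mathbb{C}}(F)=4$ and hence $rk_{\mathbb{R}}(F)\geq 4$. It therefore suffices to exclude a real Waring decomposition of length $4$.

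Assume for contradiction that $F=\sum_{i=1}^{4}\lambda_{i}l_{i}^{3}$ with $\lambda_{i}\in\mathbb{R}\setminus\{0\}$ and real linear forms $l_{i}$; write $p_{i}\in\mathbb{P}^{2}(\mathbb{R})$ for the corresponding points under the standard identification $V\cong V^{\vee}$. The first step is to compute the space of apolar conics: a direct differentiation gives
\[
F^{\perp}_{2}=\langle q_{1},q_{2},q_{3}\rangle, \qquad q_{1}=3x^{2}+3y^{2}+2z^{2},\ q_{2}=x^{2}-y^{2},\ q_{3}=xy,
\]
and the crucial point is that $q_{1}$ is positive definite. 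Two further checks are needed: that $F^{\perp}_{1}=0$ (equivalently, no nontrivial linear combination of $F_{x}=2xz$, $F_{y}=2yz$, $F_{z}=x^{2}+y^{2}-3z^{2}$ is identically zero), and that $q_{1},q_{2},q_{3}$ have no common projective zero. The distinctness of the $p_{i}$ is forced by $rk_{\mathbb{C}}(F)=4$, and a short apolarity argument using these two facts rules out any configuration in which three of the $p_{i}$ are collinear (such a configuration would force the remaining point to be a common zero of the $q_{k}$).

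The heart of the proof is to show that the four points lie in a single fiber of the rational map
\[
\phi=[q_{1}:q_{2}:q_{3}]\colon \mathbb{P}^{2}\to\mathbb{P}^{2}.
\]
Indeed, $q(\partial)F=0$ expands to $\sum_{i}\lambda_{i}q(p_{i})l_{i}=0$, and since the $l_{i}$ satisfy a unique (up to scalar) linear relation $\sum\mu_{i}l_{i}=0$ with all $\mu_{i}\neq 0$, one obtains $\lambda_{i}q(p_{i})=c_{q}\mu_{i}$ for a constant $c_{q}$. Applying this to $q=q_{1}$ and using $q_{1}(p_{i})>0$ forces the ratios $\mu_{i}/\lambda_{i}$ to share a common sign; taking ratios with $q_{2}$ and $q_{3}$ then shows that $q_{k}(p_{i})/q_{1}(p_{i})$ is independent of $i$ for $k=2,3$. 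Equivalently, $\phi(p_{1})=\phi(p_{2})=\phi(p_{3})=\phi(p_{4})$.

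The final step is a real fiber analysis of $\phi$, and this is where the main technical obstacle lies. Normalizing to the ellipsoid $q_{1}=1$ and setting $u=x+iy$, the conditions $q_{2}=\beta$, $q_{3}=\gamma$ collapse to $u^{2}=\beta+2i\gamma$, which forces $x^{2}+y^{2}=\sqrt{\beta^{2}+4\gamma^{2}}$ and hence $z^{2}=(1-3\sqrt{\beta^{2}+4\gamma^{2}})/2$. The fiber is therefore nonempty only for $\beta^{2}+4\gamma^{2}\leq 1/9$, and in that range yields exactly two antipodal pairs of affine solutions, i.e.\ at most two real projective points. A real rank-$4$ decomposition would demand four distinct real points in a single fiber, a contradiction. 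Thus $rk_{\mathbb{R}}(F)\geq 5$, and equality holds. The crux of the argument is precisely the positive-definiteness of $q_{1}$, which confines the real image of $\phi$ to a bounded disk and caps the real cardinality of every fiber at two.
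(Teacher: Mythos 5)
Your proof is correct, but it takes a genuinely different route from the paper's. The paper argues via polar conics and the Hessian: if $F=\sum_{i=1}^{4}\lambda_i l_i^3$, then for $Q=\{l_i=0\}\cap\{l_j=0\}$ the polar conic $P_Q(F)$ is a combination of the two remaining squares, hence singular, so $Q$ lies on the real locus of $H(F)=-8z(x^2+y^2+3z^2)$, which is exactly the line $z=0$; having all pairwise intersections on one line forces the four lines to be concurrent at a point $\tilde Q$ with $P_{\tilde Q}(F)\equiv 0$, i.e.\ $F$ would be a cone, a contradiction. You instead work with the degree-two part of the apolar ideal, exploit the positive definite apolar conic $q_1=3x^2+3y^2+2z^2$, and show that the four points of a putative real rank-$4$ decomposition would all have to lie in one fiber of $[q_1:q_2:q_3]$, whose real fibers you cap at two points by the explicit $u=x+iy$ computation; your separate treatment of the degenerate configuration (three collinear points forcing the fourth to be a real zero of the definite form $q_1$) correctly closes the case the unique-relation argument does not cover. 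Both proofs ultimately rest on the same real phenomenon --- a definite quadric with no real zeros (the factor $x^2+y^2+3z^2$ of the Hessian for the paper, $q_1$ in the apolar net for you) --- but the mechanisms differ: the paper's argument is shorter and purely geometric, while yours is more computational, self-contained at the level of apolarity, and has the merit of being a template that applies to any plane cubic admitting a definite apolar conic, since the fiber-counting step only uses that the net of apolar conics contains a definite form.
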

\begin{proof}
Suppose
$$
F=(x^2+y^2-z^2)z=l_{1}^3+l_{2}^3+l_{3}^3+l_{4}^3
$$
where $l_{i}$, $i=\{1,2,3,4\}$ are linear real forms.
Let $Q=l_{1}\cap l_{2}$ be the point of intersection of the two lines $l_{1}$ and $l_{2}$. 
Then, up to scalars,the polar conic of F with respect to Q
$$
P_{Q}(F)=l_{3}^2+l_{4}^2
$$
is necessarily singular.\\
Denote $L$ the external line $z=0$.
Then the point $Q\in H(F)$ and $Q\in L=\{z=0\},$ for the particular form of the Hessian, which in this case is 
$$
H(F)=-8z(x^2+y^2+3z^2)
$$
a imaginary conic plus a line.
Moreover, for the same argument, all the intersections $l_{i}\cap l_{j}\in L=\{z=0\}$ so there is only a possibility:\\
the four lines are concurrent in $\tilde Q \in L$ such that $P_{\tilde Q}(F)\equiv 0.$\\
This is impossible because F should be a cone with vertex in Q.\\
The case 6) of Table 1 is settled;the Aronhold invariant $S\neq 0$ because is just so over $\mathbb{C}$ 
then $\underline{rk}_{\mathbb{R}}(x^2+y^2-z^2)z=4.$
\end{proof}
\subsection{Imaginary triangle}
This is the other new case such that the rank over $\mathbb{R}$ is 5.\\
The proof is like the previous one: if the real rank is four we can choose 4 points on the real side 
of the triangle.\\
Let  
\begin{displaymath}
  F=x^3+y^3+z^3-3xyz=(x+y+z)(x^2-xy-xz+y^2-yz+z^2).
\end{displaymath}
Since the Hessian of $F$ coincides with $F$ itself, we may repeat the argument of proof of theorem 6.1
concluding that there are two possibilities:\\
1)\begin{displaymath}
   \lambda F-(x+y+z)^3
  \end{displaymath}
is a Fermat cubic for some $\lambda.$\\
2)the four lines are concurrent in $\tilde Q \in L=(x+y+z)=0$ such that $P_{\tilde Q}(F)\equiv 0.$\\
The first case is excluded because 
the Aronhold invariant $S$ for this pencil is
\begin{displaymath}
 S(\lambda(x^3+y^3+z^3-3xyz)-(x+y+z)^3)=\lambda^4.
\end{displaymath}
Also the second case is excluded because $F$ is not a cone.
Then
\begin{theorem}
 The rank of the imaginary triangle is 5, that is
 \begin{displaymath}
  rk_{\mathbb{R}}(x^3+y^3+z^3-3xyz)=5.
 \end{displaymath}
\end{theorem}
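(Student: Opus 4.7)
The plan is to adapt the polar conic argument of Theorem \ref{MIO} to $F = x^3+y^3+z^3-3xyz$, exploiting the fact that $F$ is self-Hessian ($H(F) = -54\, F$ by a direct computation). The upper bound $rk_{\mathbb{R}}(F) \leq 5$ holds by the general bound on real ranks of ternary cubics, so all the work goes into establishing the strict inequality $rk_{\mathbb{R}}(F) > 4$.

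I would suppose for contradiction that $F = l_1^3+l_2^3+l_3^3+l_4^3$ with real, pairwise distinct linear forms $l_i$. For each pair $i\ne j$, the polar conic $P_{Q_{ij}}(F)$ at $Q_{ij} := l_i \cap l_j$ is a scalar multiple of $c_k l_k^2 + c_m l_m^2$, hence singular, forcing $Q_{ij}$ onto the Hessian locus $\{H(F) = 0\}$. Since $H(F) \propto F$ and $F = (x+y+z)\cdot \frac{1}{2}((x-y)^2+(y-z)^2+(z-x)^2)$, the real zero set of $H(F)$ is the union of the line $L := \{x+y+z=0\}$ and the isolated point $(1:1:1) \notin L$. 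Hence every $Q_{ij}$ lies in $L \cup \{(1:1:1)\}$.

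A short combinatorial analysis of which pairs intersect on $L$ versus at $(1:1:1)$, using the distinctness of the $l_i$ and the fact that $(1:1:1) \notin L$, rules out mixed configurations and leaves exactly two possibilities: (a) one of the $l_i$ coincides with $L$ up to scalar, or (b) all four lines pass through a single point $\tilde Q \in L \cup \{(1:1:1)\}$. Case (b) would make $F$ a cone with vertex $\tilde Q$, equivalently $P_{\tilde Q}(F) \equiv 0$; this fails since $F$ is a line times an irreducible (over $\mathbb{R}$) conic and hence not a union of lines through any single point (for instance $P_{(1,1,1)}(F) = 3(x^2+y^2+z^2-xy-yz-xz) \not\equiv 0$). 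For case (a), write $F = cL^3 + (l_2^3+l_3^3+l_4^3)$, so $F-cL^3$ has real rank $\leq 3$ and hence complex rank $\leq 3$, forcing its Aronhold invariant $S$ to vanish. The pencil calculation recorded in the excerpt, $S(\lambda F - L^3) = c'\lambda^4$ with $c' \neq 0$, shows this vanishes only at $\lambda = 0$, which corresponds to no actual decomposition of $F$; so case (a) is also impossible.

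The principal obstacle I anticipate is the invariant-theoretic computation in case (a): verifying that $S$ on the pencil $\mu F + \nu L^3$ reduces to a pure $\mu^4$ term requires explicit evaluation of Aronhold's degree-four invariant and checking that all coefficients of monomials involving $\nu$ cancel. A secondary but genuinely new point, absent in Theorem \ref{MIO}, is the appearance of the extra real zero $(1:1:1)$ of $H(F)$; this requires a bit of bookkeeping in the combinatorial analysis, but is harmless since the distinctness of the $l_i$ together with $(1:1:1) \notin L$ collapses all mixed subcases back to (a) or (b).
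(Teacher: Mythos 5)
Your proposal is correct and takes essentially the same route as the paper's own proof: the pairwise intersection points of the $l_i$ are forced onto the self-Hessian, whose real locus is the line $L=\{x+y+z=0\}$ together with the point $(1:1:1)$, leaving exactly the two cases ``one $l_i$ proportional to $L$'' (excluded by the Aronhold computation $S(\lambda F-(x+y+z)^3)=\lambda^4$) and ``four concurrent lines'' (excluded because $F$ is not a cone). If anything, your explicit bookkeeping of the isolated real point $(1:1:1)$ is more careful than the paper, which passes over it silently.
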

The case 12) is settled.
\subsection{Nodal cubic}
Every plane cubic curve with a real node with two real tangent lines is projectively equivalent to the cubic
$$F=x^3+y^3-3xyz=0.$$
This is a famous cubic curve called ``Folium of Descartes``.\\
It has a double point in $(0,0,1)$ and there has a node with tangents $x=0$ and $y=0.$
The Hessian is $(x^3+y^3+xyz)(-54).$
In this case 
$$F_x=3x^2-3yz$$
$$F_y=3y^2-3xz$$
$$F_z=-3xy$$
Let 
$$\alpha F_x+\beta F_y+\gamma F_z=0$$
be the equation of the polar conic with respect to the point $(\alpha,\beta,\gamma).$
This is a reducible conic if
$$
A= \left(
\begin{array}{ccc}
 3\alpha & \frac{-\gamma}{2} & \frac{-\beta}{2}\\
 -\frac{\gamma}{2} & \beta &-\frac{\alpha}{2}\\
-\frac{\beta}{2} & -\frac{\alpha}{2} & 0
\end{array}
 \right)
$$
We deduce that $\det A=0$ if $(\alpha,\beta,\gamma)=(1,-1,0).$
So the pencil is 
$$3x^2-3yz-3y^2+3xz$$
that factors as
$$3(x+y+z)(x-y).$$
So we can write the polar of F at the point $(1,-1,0)$ as:\\
$$F_x-F_y=3\biggl\{(x+y+z)(x-y)\biggr\}=3\biggl\{\frac{1}{4}(2x+z)^2-\frac{1}{4}(2y+z)^2\biggr\}$$
because of the identity
$$ab=\frac{(a+b)^2}{4}-\frac{(a-b)^2}{4}.$$
So we have, integrating with respect to $x$ and $y$,
$$F=\frac{1}{8}\biggl\{(2x+z)^3-(2y-z)^3\biggr\}+\biggl\{function\ of\ two\ variables\biggr\}.$$
To find this function let us write the equality
$$x^3+y^3-3xyz=\frac{1}{8}\biggl\{(2x+z)^3-(2y+z)^3\biggr\}-z\biggl\{\frac{3}{2}x^2+\frac{3}{4}xz+\frac{1}{4}z^2+
\frac{3}{2}y^2+\frac{3}{4}yz+3xy\biggr\}.$$
Let $$g(x,y,z)=\frac{3}{2}x^2+\frac{3}{4}xz+\frac{1}{4}z^2+
\frac{3}{2}y^2+\frac{3}{4}yz+3xy$$ be this cubic form; it depends on two essential variables, namely $z$ and $h=x+y$.\\
We have
$$g(z,x+y)=g(z,h)=z\biggl\{\frac{z^2}{4}+\frac{3}{4}zh+\frac{3}{2}h^2\biggr\}$$
and the discriminant of the polynomial of degree 2 into the square bracket
$$z^2+3zh+6h^2$$
is 
$$\Delta=9-6\cdot 4\textless0$$
so this quadratic polynomial has rank 2  and the cubic nodal form has rank $\leq 4$.\\
Then
$$
g(z,h)=2\biggl\{\frac{1-\sqrt{5}}{4}z+h\biggr\}^3-2\biggl\{\frac{1+\sqrt{5}}{4}z-h\biggr\}^3
$$
and
$$
x^3+y^3-xyz=\frac{1}{8}\biggl\{(2x+z)^3+(2y+z)^3\biggr\}+2\biggl\{\frac{1-\sqrt{5}}{4}z+(x+y)\biggr\}^3-
2\biggl\{\frac{1+\sqrt{5}}{4}z-(x+y)\biggr\}^3.
$$
Then
$$
rk_{\mathbb{R}}(F)\leq 4
$$
and again for \cite{LAN-TEI} we conclude that $rk_{\mathbb{R}}=4$ because 
$$
4\geq rk_{\mathbb{R}}(F)\geq rk_{\mathbb{C}}(F)=4.
$$
The case 14) of Table 1 is settled.
\subsection{Cubica punctata}
Let 
$$f=y^2z-x^3+x^2z$$
be the normal form of the so called ``cubica punctata'', that is any irreducible cubic with a double point in the origin
having two complex
tangent lines $x^2+y^2=0=(x+iy)(x-iy)=0$.\\
We have
$$
f_{x}=x(2z-3x)=\frac{1}{4}\biggl\{(-2x+2z)^2-(4x-2z)^2\biggr\}
$$
so 
$$
f=\frac{1}{4}\left(-\frac{1}{2}\frac{(-2x+2z)^3}{3}\right)-\frac{1}{4}\left(\frac{(4x-2z)^3}{3}\right)+\phi(y,z)
$$
with 
$$
\phi(y,z)=y^2z+\frac{1}{6}z^3=z(y^2+\frac{1}{6}z^2)
$$
so $\phi$ is a binary cubic form with only one real root and for \cite{COM-OTT}  has rank 2.\\
We get
$$
\phi(y,z)=\frac{1}{12}\left(\sqrt{2}y+z\right)^3-\frac{1}{12}\left(\sqrt{2}y-z\right)^3
$$
so
$$
f=\frac{1}{4}\biggl\{-\frac{1}{2}\frac{(-2x+2z)^3}{3}-\frac{1}{4}\frac{(4x-2z)^3}{3}\biggr\}+
  \frac{1}{12}\left(\sqrt{2}y+z\right)^3-\frac{1}{12}\left(\sqrt{2}y-z\right)^3
$$
and finally
$$
rk_{\mathbb{R}}(f)\leq 4.
$$
Again 
$$
4\geq rk_{\mathbb{R}}(f)\geq rk_{\mathbb{C}}(f)=4
$$
so the case 15) of Table 1 is settled.
\subsection{Union of real conic plus tangent line}
Let's see the case of the irreducible conic plus tangent line where the rank is five.\\
$$
 F=18y(x^2+yz)
$$
We have the decomposition
$$
18y(x^2+yz)=(3y-x)^3+(3y+x)^3+3(z-2y)^3+3(z-4y)^3-6(z-3y)^3.
$$
We get
\begin{displaymath}
 rk_{\mathbb{R}}(F)\leq 5
\end{displaymath}
and again $rk_{\mathbb{R}}(F)\geq rk_{\mathbb{C}}(F)\geq 5$.\\
The case 8) of Table 1 is settled.\\

The case 13) is also settled in \cite{LAN-TEI} where the decomposition is given.\\
 We have
\begin{displaymath}
y^2z-x^3=\frac{1}{6}\biggl\{(y+z)^3-(y-z)^3-2z^3\biggr\}-x^3
\end{displaymath}
The last case of the triangle is settled in \cite{LAN-TEI}. 
The decomposition is :
\begin{displaymath}
xyz=\frac{1}{24}\biggl\{(x+y+z)^3-(-x+y+z)^3-(x-y+z)^3-(x+y-z)^3\biggr\}.
\end{displaymath}
The case 16) is  settled.\\
This concludes the proof of Theorem \ref{Maurizio-Giorgio}.

\subsection*{Acknowledgments}
The author would like to warmly thank Giorgio Ottaviani for suggesting the problem and for his help and support and also thank
the anonymous referee for useful comments and remarks.

\end{document}